\newtheorem{thm}{Theorem}[section]
\title{Unstructured Geometric Multigrid in Two and Three Dimensions on Complex and Graded Meshes}
\author{Peter R. Brune \footnotemark[2]
  \and Matthew G. Knepley \footnotemark[3]
  \and L. Ridgway Scott \footnotemark[2]}
\begin{document}
\maketitle

\pagestyle{myheadings}
\thispagestyle{plain}
\markboth{BRUNE ET AL.}{GRADED UNSTRUCTURED MG} 

\renewcommand{\thefootnote}{\fnsymbol{footnote}}

\footnotetext[2]{Department of Computer Science, 1100 S. 58th St., University of Chicago, Chicago, IL 60637
  (brune@uchicago.edu, ridg@uchicago.edu).  Partial support from NSF grant DMS-0920960.}

\footnotetext[3]{Computation Institute, 5735 S.  Ellis Ave., University of Chicago, Chicago, IL 60637,
  (knepley@ci.uchicago.edu), Supported by the Office of Advanced Scientific Computing Research, Office of Science, U.S.
  Department of Energy, under Contract DE-AC02-06CH11357.  }

\renewcommand{\thefootnote}{\arabic{footnote}}

\begin{abstract}
  The use of multigrid and related preconditioners with the finite element method is often limited by the difficulty of
  applying the algorithm effectively to a problem, especially when the domain has a complex shape or the mesh has adaptive
  refinement.  We introduce a simplification of a general topologically-motivated mesh coarsening algorithm for use in
  creating hierarchies of meshes for geometric unstructured multigrid methods. The connections between the guarantees of
  this technique and the quality criteria necessary for multigrid methods for non-quasi-uniform problems are noted.  The
  implementation details, in particular those related to coarsening, remeshing, and interpolation, are discussed.
  Computational tests on pathological test cases from adaptive finite element methods show the performance of the
  technique.
\end{abstract}

\section{Introduction}
\label{sec:intro}





Multigrid methods enable the asymptotically optimal approximate numerical solution to a wide array of partial
differential equations.  Their major advantage is that they have, for a wide class of problems, $O(n)$ complexity for
solving a linear system of $n$ degrees of freedom (DoFs) to a specified accuracy.  However, there are many stumbling
blocks to their full implementation in conjunction with adaptive unstructured finite element methods.


Algebraic multigrid (AMG) methods and widely-available libraries implementing them have made great strides in allowing
for the widespread use of multilevel methods in computational science \cite{StubenAMG}.  This is due to their relatively
black-box nature and ability to be used for a wide variety of common problems without much expert tuning.  Interesting
problems often must use meshes that have been adaptively refined in order to resolve numerical error or scale of
interest without regard to multilevel discretization.  In order to develop a fast multilevel method for such a problem,
either an algebraic or geometric coarsening procedure must be employed.  Practitioners faced with such problems are
often immediately drawn to AMG because of the difficulties of geometric coarsening on nontrivial domains.

However, there are definite advantages to geometric multigrid.  In fact, there have been several attempts to integrate
some geometric intuition into algebraic methods \cite{FeuchterMG,ChowMGSmoothness,JonesAMGe}.  For one, efficiency
guarantees for AMG are based upon heuristic, computational observation, or theoretical bounds
that are hard to satisfy computationally.  Geometric multigrid methods, on the other hand, can be proven to have optimal
convergence properties assuming certain conditions on the meshes, which will be elaborated upon in Section~\ref{sec:umg}.



An observation made by Adams \cite{adamsNNMGevaluation} on the state of unstructured geometric multigrid methods was
that while Zhang \cite{ZhangOOMultigridI, ZhangNonNestedII} provided guarantees of the optimality of geometric multigrid
methods, they are not directly applicable to the methods of mesh coarsening and hierarchy construction as they are
typically implemented.  Our goal here is to use computational geometry methods to meet these quality criteria, to show a
way of constructing a set of meshes in an optimal fashion that would satisfy these criteria, and to show that we have
constructed a successful optimal-order multigrid method.


\subsection{Adaptive Finite Elements and Multigrid}
\label{sec:afem}


A fairly typical scheme for constructing a series of meshes for geometric multigrid methods involves uniform refinement
from some coarsest starting mesh.  However, the goals of refinement for the purpose of resolving discretization or
modelling error and refinement for the purposes of creating a multigrid hierarchy can easily be at odds with each other
if care is not taken.  If a mesh has been refined to satisfy some analytical or adaptively determined error control,
then it may have vastly different length scales existing in different parts of the mesh.  It is because of this that the
combination of refinement in order to resolve physics and refinement for the purpose of multigrid is fraught with peril.


If one refines for the purpose of creating a multigrid hierarchy from an initial coarse mesh with some grading to handle
the numerical properties of the problem, the refinement of the finest mesh will not reflect the error properly.
Conversely if some stages of the refinement for the physics are used for the multigrid hierarchy, one may be unable to
guarantee that the meshes would satisfy the quality and size constraints required by multigrid.  Because of this it is
very difficult to make the two concepts, adaptive refinement and geometric multigrid, go hand in hand as a single
adaptive meshing strategy that satisfies the needs of both methods in the general case.


In typical applications in computational science, one is often given a mesh that has already been adapted to the physics
of some problem and asked to solve some some series of linear equations repeatedly on this static mesh.  Two examples
where this happens quite frequently are in optimization problems, and in the solution of nonlinear equations by methods
requiring the equations to be linearized and solved repeatedly.  In this case, the only available geometric multigrid
methods are based upon coarsening, and huge advantages may be reaped from precomputation of a series of coarser spaces
that effectively precondition the problem.

The need for a mesh refined to resolve error can be demonstrated in some fairly straightforward cases.  The need for
\textit{a priori} grading around a reentrant corner to resolve pollution effects \cite{BabuskaDirectInverse} is a
well-studied classical phenomenon \cite{ApelComparison, LiaoPollutionEffects, BlumExtrapolation} in adaptive finite
element computation.  Multigrid computations on reentrant corner problems have been analyzed on simple meshes in the two
dimensional case with shape-regular refinement \cite{chen2010coarsening} or structured grading
\cite{JungMultilevelGraded}.  A mesh arising from these requirements has disproportionately many of its vertices
concentrated around the reentrant corner, such as the refined mesh shown in Figure~\ref{fig:reentrantcorner}. Around a
reentrant corner of mesh interior angle $\theta_r \geq \pi$ we can define a factor
\begin{equation*}
\label{eq:mudefinition}
\mu \geq \frac{\pi}{\theta_r}
\end{equation*}
Then, given some constants $C_a > 0$ and $C_b < \infty$ related to the maximum length scale in the mesh, a mesh that
will optimally resolve the error induced by the reentrant corner, for $h$ as the length scale of the cell
containing any point a distance $r$ away from the corner, will satisfy: 
\begin{equation*}
  \label{eq:cornergrading}
  C_ar^{1 - \mu} \leq h \leq C_br^{1 - \mu}
\end{equation*}
This problem will be considered further in Section~\ref{sec:problems}.
\begin{figure}[H]
  \centering
  \includegraphics[width=140mm]{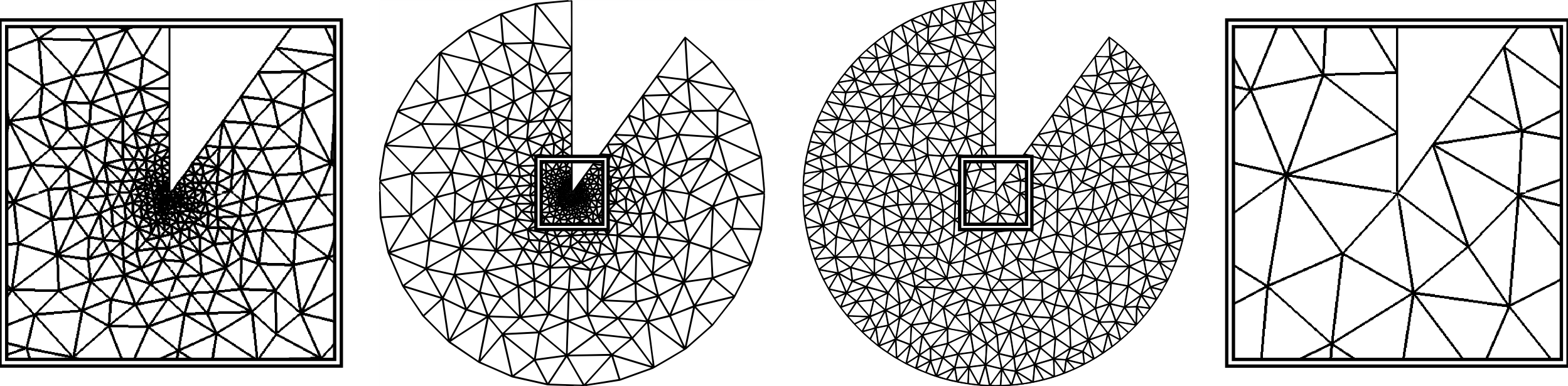}
  \caption{Quasi-uniform (right) and \textit{a priori} (left) refined meshes: 500 vertices}
  \label{fig:reentrantcorner}
\end{figure}
\subsection{Unstructured Geometric Multigrid}
\label{sec:umg}


When the creation of a coarsened hierarchy is considered in the geometric sense, we mean that the problem is reduced to
finding some series of meshes $\{M^0 ... M^n\}$ where $M^0$ is the initial mesh and and $M^n$ is the coarsest mesh.
This hierarchy is constructed starting from $M^0$.  The differential operator is either rediscretized on each mesh in
order to create the series of coarse problems, or interpolators between function spaces on the meshes are used to create
approximate coarse versions of the original fine linear system, a process known as Galerkin multigrid \cite{SmithDD}.
The experiments in this paper are all done by rediscretization.

One restriction that we will maintain throughout this paper is that the meshes are simplicial and node-nested.  This
means that the vertices of mesh $M^{i+1}$ are some subset of the vertices of mesh $M^{i}$.  This is an assumption used
by many mesh refinement and coarsening methods, as it is a reasonable and common assumption that the vertices of the
initial fine mesh adequately capture the domain's geometry.  This is in contrast to fully nested multigrid hierarchies,
where each coarse cell is the union of several fine cells.  A series of non-nested unstructured coarse meshes do not
necessarily need to be node-nested either.  For instance, the mesh hierarchy may be created by repeatedly remeshing some
CAD or exact description of the domain at different resolutions.


There is some justification for extending the classical multigrid convergence results to the case of non-nested meshes
\cite{lonermorganunstructMG, bittencourt1998non, adaptiveNNMGbitten}.  Results of optimal multigrid convergence have
been extended to non-quasi-uniform \cite{ZhangNonNestedII} and degenerate meshes \cite{zhang1995optimal} in two
dimensions.  Three dimensional non-nested multigrid has been proven to work in the quasi-uniform case \cite{RidgProof}.

We will use the mesh quality conditions required for the non-quasi-uniform case \cite{ZhangNonNestedII}.  While the
example mesh hierarchies used in this paper might be quasi-uniform when considered alone, we may not assume
quasi-uniformity independent of the size of the finest mesh in the asymptotic limit of refinement.  It should be noted
that the non-quasi-uniform theory does not exist for three dimensions, but the quasi-uniform theory \cite{RidgProof}
serves as a guide to how the method should perform.

\subsection{Mesh Constraints}

For any mesh $M$ and any cell $\tau$ in $M$ let $\rho_{\tau}$ be $\tau$'s incircle diameter and $h_\tau$ its longest
edge, and define the aspect ratio as
\begin{equation}
  \label{eq:aspectratio}
  AR(\tau) = \frac{h_\tau}{\rho_{\tau}}.
\end{equation}
The quality of approximation \cite{ShewchukGood} and matrix condition number \cite{babuskaangle} in FEM calculations
depend strongly on $AR(\tau)$.  For some constant $C_{AR}$, we must require that the aspect ratio of any given cell in
$\{M_0 ... M_n\}$ satisfy
\begin{equation}
  \label{eq:aspectratiobound}
  AR(\tau) \leq C_{AR}.
\end{equation}
The other half of the criteria are the local comparability conditions.  Assume that if we have two meshes, $M^k$ and
$M^{k-1}$, we define, for some cell $\tau$ in $M^k$
\begin{equation}
  \label{eq:overlapset}
  S_{\tau}^k = \{T : T \in M^{k - 1}, T \cap \tau \neq \emptyset\}
\end{equation}
which defines the set of cells in a mesh $M^{k-1}$ overlapping a single cell $\tau$ in the next coarser mesh $M^k$. We
can state the local comparability conditions as
\begin{subequations}
  \begin{equation}
  \label{eq:loccomp1}
  \sup_{\tau \in M^k}\{|S^k_{\tau}|\} \leq C_o \mbox{\text{ for some }} C_o \geq 0
  \end{equation}
  \begin{equation}
  \label{eq:loccomp2}
  \sup_{\tau \in M^k}\{\sup_{T \in S^k_{\tau}}\{\frac{h_\tau}{h_T}\}\} \leq C_l \mbox{\text{ for some }} C_l \geq 1
  \end{equation}
\end{subequations}
Eq.~\ref{eq:loccomp1} implies that each cell intersects a bounded number of cells in the next finer mesh, and
Eq.~\ref{eq:loccomp2} that the length scale differences of overlapping cells are bounded.  We will also state here, for
the sake of completeness, the assumption from the standard proofs of multigrid convergence necessary for the algorithmic
efficiency of the method.  Define
\begin{equation*}
  \label{eq:cardinalityM}
  |M| = \text{\# of cells in mesh $M$}
\end{equation*}
Then, for some $C_m \geq 2$
\begin{equation}
  \label{eq:sufficientdecrease}
  |M^k| > C_{m}|M^{k+1}|.
\end{equation}
The implication is that at each coarser mesh there must be a sufficient decrease in the number of cells.  A geometric
decrease in the number of cells over the course of coarsening is necessary in order to have an $O(n)$ method.



\section{Mesh Coarsening}
\label{sec:coarsening}


For node-nested coarsening of an unstructured mesh, we separate the process of mesh coarsening into two stages: coarse
vertex selection and remeshing.  The problem of coarse vertex selection on a mesh $M$ involves creating some subset
$V^c$ of the vertices $V$ of $M$ that is, by some measure, coarse.  Several techniques have been described for coarse
vertex selection on unstructured simplicial meshes.  The most widely used of these methods are Maximum Independent Set
(MIS) algorithms \cite{GuillardMultigrid, ChanMultigrid, AdamsUMG}.  These choose a coarse subset of the vertices of the
mesh such that no two vertices in the subset are connected by a mesh edge.  The resulting set of vertices is then
remeshed.  

Given some mesh $M$, we define the graph $G_M = (V, E)$, which consists of the edges $E$ and vertices $V$ of $M$.  The
MIS algorithms may then be described as finding some set of coarse vertices $V^{MIS}_c$ such that
\begin{equation*}
  (v_a, v_b) \not\in E \mbox{ for all pairs of vertices $(v_a, v_b) \in V^{MIS}_c$}.
\end{equation*}
This may be implemented by a simple greedy algorithm where at each stage a vertex is selected for inclusion in
$V^{MIS}_c$ and its neighbors are removed from potential inclusion.  There are a couple of issues with this method.  The
first being that there is no way to determine the size of the mesh resulting from coarsening.  The other is that there
are no real guarantees on the spatial distribution of vertices in the resulting point set.  It has been shown that MIS
methods may, for very simple example meshes, degrade the mesh quality quite rapidly \cite{MillerCoarsen}.  Other methods
for choosing the coarse vertex selection have been proposed to mitigate this shortcoming, often based upon some notion
of local increase in length scale \cite{ReyNodeNested}.


%

\subsection{Function-Based Coarsening}

The coarse vertex selection method we will focus on for this was developed by Miller et al. \cite{MillerCoarsen} and is
referred to as function-based coarsening.  The method begins by defining some spacing function $Sp(v)$ over the vertices
$v \in V$ of $M$.  $Sp(v)$ is some approximation of the local feature size.  In practice, the approximation of the local
feature size based upon the nearest-mesh-neighbor distance at each vertex is a reasonable and efficiently computable spacing
function.

Define $\beta$ as the multiplicative coarsening parameter.  $\beta$ can be considered the the minimum factor by which
$Sp(v)$ is increased at $v$ by the coarsening process.  Here we choose $\beta > \beta_0$ where $\beta_0 = \sqrt{2}$ in
2D and $\beta_0 = \sqrt{3}$ in 3D by the simple fact that $\beta = \beta_0 + \epsilon$ for small $\epsilon$ would
reproduce the repeated structured coarsening of an isotropic, structured, shape-regular mesh where the length scale is
increased by two in every direction.  $\beta$ may also be tuned, changing the size of the set of resulting coarse
vertices in a problem-specific fashion to account for mesh and function-space properties, such as polynomial order.

We say that a coarse subset of the vertices of $M$, $V^{FBC}_c$ satisfies the spacing condition if
\begin{equation}
\label{eq:spacingfunctioncondition}
\beta (Sp(v_i) + Sp(v_j)) < dist(v_i, v_j) \mbox{ for all pairs $(v_i, v_j) \in V^{FBC}_c$}.
\end{equation}
After determining $V^{FBC}_c$, $M^c$ may be created by some remeshing process.  A hierarchy of meshes may then be
created by reapplying the coarsening procedure to each newly-minted coarse mesh with constant $\beta$ and some
recalculated $Sp$ in turn to create a yet coarser mesh.  This may be done until the mesh fits some minimum size
requirement, or until the creation of a desired number of meshes for the hierarchy.

The original authors \cite{MillerCoarsen} had fast numerical methods in mind when proving quality bounds for
function-based coarsening, and a number of the properties required of the meshes are spelled out in the original work.
For one, the maximum aspect ratio of the resulting mesh hierarchy may be bounded by some constant, satisfying
Eq.~\ref{eq:aspectratiobound}.  We also have that, with $h_{i}(x)$ being the length-scale of the cell overlapping the point
$x$ for all $x \in \Omega$:
\begin{equation*}
\frac{1}{\mathcal{I}}h_{i+1}(x) \leq h_{i}(x) \leq \mathcal{I}h_{i+1}(x) \mbox{ for some $\mathcal{I} > 0$.}
\end{equation*}
Note that this condition implies the first of the local comparability conditions (Eq.~\ref{eq:loccomp1}).
By combining the length scale bound with the aspect ratio bound, one may infer that only a certain
number of fine cells may be in the neighborhood of a coarse cell, bounding the number of overlapping cells
(Eq.~\ref{eq:loccomp2}).  Finally, for the coarsest mesh $M_n$ and some small constant $b$,
\begin{equation*}
  |M_n| \leq b.
\end{equation*}
This implies that the coarsening procedure will be able to create a constant-sized coarse mesh.  Because of these
conditions and their parallels with the conditions on the multigrid hierarchy, this method for coarsening is
particularly appealing.

Parts of the function-based coarsening idea have been incorporated into other methods for mesh coarsening by
Ollivier-Gooch \cite{OllivierGoochContraction}.  Some similarities between the method we propose here and this previous
work are that both use traversal of the mesh and local remeshing in order to coarsen meshes that exhibit complex
features.  This method constructs the conflict graph
\begin{equation*}
  \label{eq:conflictgraph}
  G_{C} = (V, E_C)
\end{equation*}
where
\begin{equation*}
  \label{eq:conflictedges}
  E_C = \{(v_i, v_j) : \beta(Sp(v_i) + Sp(v_j)) > dist(v_i, v_j)\}.
\end{equation*}
This graph is then coarsened with an MIS approach as shown above.  Note that in the limit of extreme grading or large
$\beta$ the $G_C$ can grow to be of size $O(|V|^2)$.  Our method avoids this by localizing and simplifying the
notion of the spacing function and choice of vertex comparisons.  We modify function based coarsening in a way that
reliably guarantees optimal complexity irregardless of mesh non-quasi-uniformity as discussed in Section~\ref{sec:afem}
without dependence on the mesh and the parameter $\beta$.

\subsection{Graph Coarsening Algorithm}
\label{sec:decimation}



We describe here a greedy algorithm for determining an approximation $V^{GCA}_c$ to the set of coarse vertices
$V^{FBC}_c$ satisfying a weakened notion of the spacing condition based upon $G_M$.  Talmor \cite{TalmorThesis} proposed
using shortest-weighted-path distance determined by traversing along mesh edges to accomplish this.  Instead of doing
using the shortest-weighted-path approach, we choose to use the edge connectivity to progressively transform $G_M$ into
some final $G_{\tilde{M}^c}$, which approximates the connectivity of the coarse mesh, $M^c$. It should be noted that,
beyond the initial $G_M$ formed at the start of the algorithm, $G_M$ does not necessarily satisfy the condition that its
edges represent the edges of a valid simplicial mesh.  Remeshing given $M$ and $V^{GCA}$ as it is created in this
section is discussed in Section~\ref{sec:remeshing}.

\begin{figure}
  \centering
    \subfigure[$G_M$ with $Sp(v)$] {
      \label{fig:fbc_a}
      \includegraphics[width=50mm]{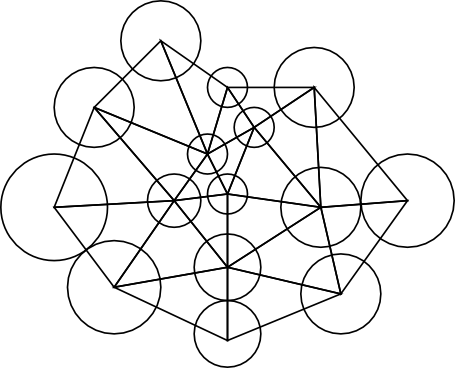}
    }
    \subfigure[$G_M$ with $\beta Sp(v)$] {
      \label{fig:fbc_b}
      \includegraphics[width=50mm]{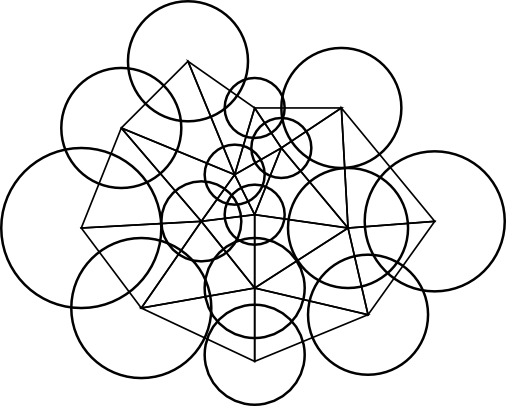}
    }
    \\
    \subfigure[Subalgorithm on $v$] {
      \label{fig:fbc_c}
      \includegraphics[width=50mm]{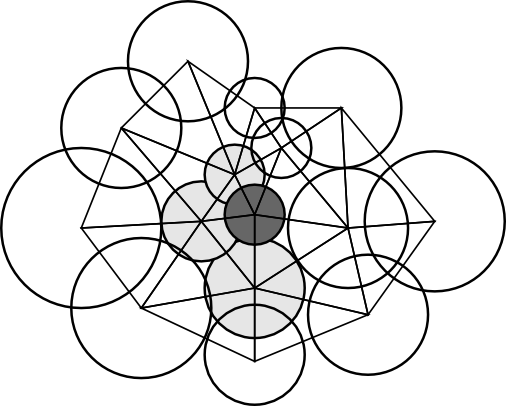}
    }
    \subfigure[Subalgorithm on $v_1$] {
      \label{fig:fbc_d}
      \includegraphics[width=50mm]{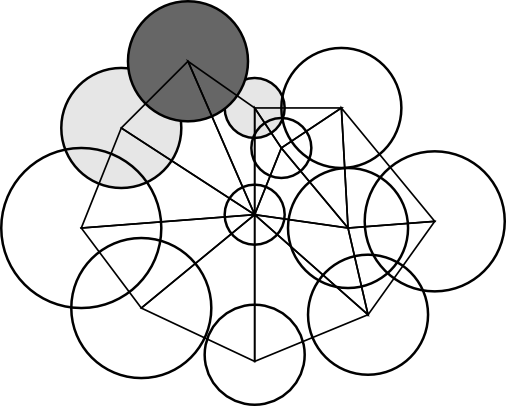}
    }
    \label{fig:fbc}
    \caption{This shows coarsening of a patch of $G_M$.  $Sp$ is defined, multiplied by $\beta$, and the subalgorithm is
      invoked for two vertices, $v$ and $v_1$ shown in Figs. \ref{fig:fbc_c} and \ref{fig:fbc_d} in dark grey with
      vertices violating the graph spacing condition shown in light gray.}

\end{figure}

We begin this process by modifying condition \ref{eq:spacingfunctioncondition} to be
\begin{equation}
  \label{eq:graphspacingcondition}
  \beta (Sp(v_1) + Sp(v_2)) < dist(v_1, v_2) \mbox{ for all $(v_1, v_2) \in E$ of $G_M$.}
\end{equation}
This restricts the spacing condition to take into account only distance between vertices connected by graph edges rather
than all pairs of vertices.  This restriction makes sense considering the calculation of the $Sp(v)$ as nearest-neighbor
distance is based upon adjacency of the mesh.  This is an important simplification for complex domains, where the mesh
may represent complex features of the domain such as holes, surfaces, and interfaces with different local
characteristics on each side that may be hard to encode when the vertices are considered as a point cloud. In our model,
the edges of the mesh are seen as encoding the topological connectivity of the domain.  Spacing only applies to
topologically connected areas of the mesh as encoded in $G_M$.

Define
\begin{equation*}
  F(v) = \mbox{\text{\texttt{unknown}, \texttt{included}, or \texttt{excluded} for all}} v \in V.
\end{equation*}
The algorithm starts with $F(v) = \text{\texttt{unknown}}$ for all $v \in V$.  Vertices that must be included for some
reasons such as the boundary issues described in Section~\ref{sec:boundaries}, are set to be \texttt{included}
automatically and visited first.  The remaining $v \in V$ are then visited in some arbitrary order.  If $F(v) =
\text{\texttt{excluded}}$, the iteration skips the vertex.  If $F(v) = \text{\texttt{unknown}}$, $F(v)$ is changed to
$\text{\texttt{included}}$.

When $F(v)$ is set to \texttt{included}, a subalgorithm is invoked that, given $G_M$ and some particular vertex $v$,
transforms $G_M$ to some intermediate $G_M^v$.  This subalgorithm corresponds to coarsening the area around $v$ until
the spacing function is satisfied for all $N_{G^v_M}(v)$.  Each $w \in N_{G_M}(v)$ are tested to see if the edge $(v,
w)$ violates Eq.~\ref{eq:graphspacingcondition}.  In the case that the condition is violated and $F(w) =
\text{\texttt{unknown}}$, $w$ is removed from $G_M$ and $F(w)$ is changed to $\text{\texttt{excluded}}$.  A removed
$w$'s neighbors in $G_M$ are added to $N_{G_M}(v)$ by adding edges in $G_M$ between $v$ and all $u \in N_{G_M}(w)$ if $u
\not\in N_{G_M}(v)$ already.  This may also be considered as an edge contraction from $w$ to $v$.

The outcome of this subalgorithm, $G^v_M$, has that all $w \in N_{G^v_M}(v)$ such that $F(w) = \text{\texttt{unknown}}$
obey Eq.~\ref{eq:graphspacingcondition}.  There is the possibility that for some $v_1 \in N_{G^v_M}(v)$, $F(v_1) =
\text{\texttt{included}}$ and Eq.~\ref{eq:graphspacingcondition} is not satisfied.  This may arise due to the necessary
inclusion of boundary vertices due to conditions described in Section~\ref{sec:boundaries}.  After the subalgorithm
terminates, one is left with $G^v_M$.  The algorithm then visits some $v_1$ (Figure \ref{fig:fbc_d}), which, if
\texttt{included}, will create some $G^{v_1}_M$ by modification of $G^{v}_M$.  Once every vertex is visited, the whole
algorithm terminates.  At this point we may define
\begin{equation*}
V^{GCA}_c = \{v : v \in V, F(v) = \text{\texttt{included}}\}.
\end{equation*}
Despite the fact that we are considering coarsening and remeshing as a preprocessing step, we still have the goal that
the algorithm should be $O(n)$ with $n$ as the number of DoFs in the discretized system.  For piecewise linear
finite elements, the number of DoFs in the system is exactly the number of vertices, $|V|$, in $M$.  For all reasonably
well-behaved meshes, we can additionally assume that $|E| = O(|V|)$ and $|M| = O(|V|)$.  This
implies that $n = O(|V|)$.  The complexity of a single invocation of the subalgorithm may be $O(|V|
+ |E|)$ if some large fraction of the mesh vertices are contracted to some $v$ in one step.  However, the aggregate
number of operations taken to reach $G_{\tilde{M}^c}$ must be the order of the size of the original graph $G_M$.  This
aggregate complexity bound is independent of the order in which $v$ are visited.  While here we keep the assumed order
in which the vertices are visited arbitrary, we see in Section~\ref{sec:boundaries} that specifing some restrictions on
the order may be used to preserve mesh features.


\begin{thm}
  \label{thm:complexity}
  Given a graph derived from a mesh $G_M$, the graph coarsening algorithm will create $V^{GCA}_c$ in $O(|V| +
  |E|)$ time.
\end{thm}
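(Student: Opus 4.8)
The plan is to prove the bound by \emph{amortized analysis}, since, as the excerpt already observes, a single invocation of the subalgorithm can cost $O(|V|+|E|)$ when a large fraction of the vertices contract to one $v$; the point is that the aggregate cost telescopes. I would first record the structural facts that make amortization possible. Each vertex carries a state $F(v)$ that only ever moves from \texttt{unknown} to \texttt{included} or from \texttt{unknown} to \texttt{excluded}, an \texttt{excluded} vertex is deleted from $G_M$, and an \texttt{included} vertex is never deleted. Consequently every vertex is \emph{contracted at most once} and \emph{included at most once}, so the outer loop, which merely visits each vertex and dispatches on $F(v)$, contributes $O(|V|)$ to the running time. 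It remains to account for the edge work performed inside the subalgorithm.

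Second, I would establish the key structural lemma: \emph{an edge contraction never increases the number of edges of $G_M$}. When $w$ is contracted into $v$, the edge $(v,w)$ and every edge $(w,u)$ incident to $w$ are removed, and for each $u \in N(w)\setminus\{v\}$ an edge $(v,u)$ is created only if it did not already exist. Thus the contraction removes $\deg(w)$ edges and creates at most $\deg(w)-1$ edges, so the net change is at most $-1$; in particular the \emph{gross} number of edges created during one contraction is at most $\deg(w)$. Since the edge set is therefore monotonically non-increasing, the number of edges present at any time is bounded by $|E|$.

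Third, and at the heart of the argument, I would run a potential/charging scheme to bound $\sum_w \deg(w)$, the sum being over the one-time contraction of each excluded vertex, where $\deg(w)$ is its degree in the \emph{current} graph at the moment it is contracted. Take the potential $\Phi = \sum_{v \,:\, F(v)=\text{\texttt{unknown}}} \deg(v)$, equal to the number of incidences between an edge and an \texttt{unknown} endpoint. Initially $\Phi \le 2|E|$, and $\Phi \ge 0$ throughout. I would then check that no operation increases $\Phi$: including a vertex only removes its incidences, and contracting $w$ into $v$ removes all $\deg(w)$ incidences at $w$ while relabeling each surviving edge $(w,u)$ to $(v,u)$, whose permanently \texttt{included} endpoint $v$ contributes nothing to $\Phi$ (a coalescing of a duplicate edge can only decrease $\Phi$ further). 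Hence each contraction decreases $\Phi$ by at least $\deg(w)$, and summing telescopes to $\sum_w \deg(w) \le \Phi_{\text{init}} \le 2|E|$. Since the processing of a single contraction costs $\Theta(\deg(w))$, the total contraction work is $O(|E|)$; the same bound controls the gross number of created edges, so the total number of edge instances ever present is $O(|E|)$, and each is examined for the spacing condition $O(1)$ times per incident included endpoint, hence at most twice, giving $O(|E|)$ examination work as well.

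Combining the $O(|V|)$ loop overhead with the $O(|E|)$ edge work yields the claimed $O(|V|+|E|)$ bound. The main obstacle is precisely the step that produces the potential function: because contraction \emph{creates} new edges, the graph cannot be treated as static and a per-step worst-case count is hopeless. The crux is the observation that each vertex is absorbed at most once and that absorbing $w$ permanently removes all $\deg(w)$ of its current incidences from the \texttt{unknown} part of the graph, which is exactly what makes the telescoping sum over contractions collapse to $O(|E|)$ independently of the order in which vertices are visited.
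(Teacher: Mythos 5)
Your proof is correct, and it arrives at the same bound by a somewhat different accounting device than the paper. The paper's proof is a direct charging argument: vertex work is $O(|V|)$ because each vertex is visited once, and edge work is $O(|E|)$ because each edge is considered at most twice --- an edge deleted at its first consideration is never seen again, while a surviving edge has one \texttt{included} endpoint, can only be reconsidered during the (single) visit to its other endpoint, and thereafter both endpoints are \texttt{included}, so a third consideration is impossible. You reproduce that ``at most twice'' argument for the examination work, but you add two things the paper leaves implicit. First, your structural lemma that a contraction of $w$ removes $\deg(w)$ edges and creates at most $\deg(w)-1$ makes rigorous a point the paper glosses over: the edges examined during a visit need not be edges of the original $G_M$, and the claim ``each edge is visited at most twice'' is only literally true once created edges are identified with (or charged against) the original edges they descend from, so that the number of edge identities ever present is $O(|E|)$. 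Second, your potential function $\Phi = \sum_{F(v)=\text{\texttt{unknown}}} \deg(v)$ explicitly pays for the $\Theta(\deg(w))$ cost of performing each contraction (the relabeling of $w$'s incidences onto $v$), a cost the paper's proof never charges to anything; the telescoping bound $\sum_w \deg(w) \le 2|E|$ closes that hole cleanly and, as you note, does so independently of the visit order, matching the paper's remark to that effect. In short, the paper's argument is shorter and combinatorially direct, while yours is more airtight on precisely the step --- edge creation during contraction --- where a careful reader might object.
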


\begin{proof}
  \label{prf:complexity}
  The fact that the complexity of the algorithm depends on $|V|$ at least linearly is apparent from the fact that every
  vertex is visited once.  In order to show that the entire algorithm is $O(|V| + |E|)$ we must establish that
  each edge $e$ is visited at most a constant number of times independent of the size of the graph.

  As vertex $v$ is visited and $F(v)$ is set to \texttt{included}, the subalgorithm inspects each $e = (v, n)$ for $n
  \in N_{G_M}(v)$ to see if they satisfy the spacing condition with $v$.  $e$ is either deleted from $G_M$ if $n$ and
  $v$ do not satisfy the spacing condition, or left in place if the spacing condition is satisfied or if $F(n) =
  \text{\texttt{included}}$.

  Edges that are deleted are not ever considered again.  Therefore, we must focus on edges that survive comparison.
  Suppose that an edge $e = (v_0, v_1)$ is considered a second time by the subalgorithm at the visit to vertex $v_0$.
  As this is the second visit to consider $e$, $F(v_1)$ must be \texttt{included} as in the first consideration of $e$
  necessarily happened during the visit to $v_1$.  As both endpoints of $e$ have now been visited, there is no way $e$
  may be considered a third time.  As each vertex is visited once and the distance across each edge in $G_M$ is
  considered no more than twice, the algorithm runs in $O(|V| + |E|)$ time. \qquad
\end{proof}

\subsection{Remeshing}
\label{sec:remeshing}

In the node-nested case, the problem of remeshing may be treated as a procedure taking a mesh $M$ and some coarse subset
of $M$'s vertices $V^c$ and returning a new mesh $M^c$ that approximates the domain spanned by $M$ but only includes
vertices $V^c$.  Traditional constrained remeshing methods using standard meshing technology require specialized
information about the domain, including the location and topology of holes in the domain, that is not typically provided
with a pre-graded fine mesh.  This inhibits automation since a great deal of expert input is necessary to preserve the
boundary during coarsening.  Instead, we choose to locally remove the set of vertices $V \setminus V^c$ from $M$ one at
a time to create $M^c$.

In the 2D case, we remove a vertices from $M$ in-place by a simple series of geometric predicates and edge flips.  This
procedure is completely localized to the neighborhood around the vertex being removed.  This is done in a way that
retains the Delaunay condition~\cite{MostafaviDeleteInsert} if the initial mesh was Delaunay.  Similar but much more
complex techniques exist in 3D~\cite{LedouxFlipping}, but are difficult to make computationally robust enough for the
requirements of multigrid methods, where the creation of a single bad cell may severely impact the strength of the
resulting multigrid preconditioner.  In this work, we relax the Delaunay condition and instead focus on mesh quality of
the sort required by the multigrid method as stated in Section~\ref{sec:umg}.

An extremely simple method of removing a vertex $v$ from the mesh $M$ is by quality-conserving edge contraction.
Similar approaches have been proposed for this problem previously \cite{OllivierGoochContraction}.  This may be stated
as choosing $n^* \in N(v)$ such that for the set of cells $C_n$ that would be created during the contraction,
\begin{equation*}
  n^* = \operatorname*{\arg \min}\limits_{\substack{n \in N(v)}}(\displaystyle\max_{c \in C_n}AR(c))
\end{equation*}
subject to the constraint
\begin{equation*}
  \max_{c \in C_{n}}AR(c) < C_{AR}.
\end{equation*}
Each resulting $c$ must also be properly oriented in space.

The computational work done to remove a single vertex is $O(|C_v||N(v)|)$ as each potential configuration $C_n$ has
that, when compared to the set of cells adjacent to $v$, $C_v$, $|C_n| < |C_v|$ because at least two cells adjacent to
both $n$ and $v$ must be removed by each contraction. For each $n \in N(v)$ one must check the aspect ratio and
orientation of each cell in the associated $C_n$.  Assuming bounded mesh degree, the removal of $O(|V|)$ vertices would
take $O(|V|)$ time.

Note that we are directly enforcing the maximum aspect ratio that a given mesh modification may create, which makes this
akin to best effort remeshing.  If it is impossible to satisfy the quality conditions required of the mesh while
removing a vertex, the vertex is left in place, at least for the time being.  Such vertices are revisited for removal if
their link is modified by the removal of one of their neighbors, a process that may only happen $O(N(v))$
times, bounding the number of attempts that may be made to remove a given vertex.  Due to this, we cannot guarantee that
this 3D remeshing finds a tetrahedralization including just the coarse vertices. However, we note that our experience
with this method is that the number of vertices that must be left in place is significantly less than the typical number
of Steiner points added through direct Delaunay remeshing using \textit{Tetgen}~\cite{TetgenWeb}, and that this method
produces tetrahedralizations entirely suitable for use with multigrid methods.  The results of a study of the quality of
meshes created by this method is shown in Section~\ref{sec:quality}.

\subsection{Mesh Boundaries}
\label{sec:boundaries}

Preservation of the general shape of the domain is important to the performance of the multigrid method as the coarse
problem should be an approximation of the fine problem.  In the worst case a coarser mesh could become topologically
different from the finer mesh leading to very slow convergence or even divergence of the iterative solution.  If this is
to be an automatic procedure, then some basic criteria for the shapes of the sequence of meshes must be enforced.
Therefore, the vertex selection and remeshing algorithms must be slightly modified in order to take into account the
mesh boundaries.

First, we must define the features, in 2D and 3D, which require preservation.  We choose these features by use of
appropriate-dimensional notions of the curvature of the boundary.  Techniques like these are widely used in
computational geometry and computer graphics \cite{DynCurvature, RusinkiewiczCurvature}.  In 2D, the features we choose
to explicitly preserve are the corners of the mesh.  We consider any boundary vertex with an angle differing from a
straight line more than $C_{\mathcal{K}}$ to be a corner.  For the sake of this work we assume $C_{\mathcal{K}} =
\frac{\pi}{3}$.

In 3D, the discrete curvature at a boundary vertex is computed as the difference between $2\pi$ and the sum of the
boundary facet angles tangent to that vertex.  Vertices where the absolute value of the discrete curvature is greater
than $C_{\mathcal{K}}$ are forced to be included.  High-dihedral-angle edges, corresponding to ridges on the surface of
the domain, must also be preserved.  We consider any boundary edge with the absolute value of the dihedral angle greater
than $C_{\mathcal{K}}$ to be a boundary ridge.  

Our approach to protecting the boundary during the coarse vertex selection procedure is to separate vertex selection
into two stages, one for the interior, and one for the boundary.  In the interior stage, the boundary vertices are
marked as \texttt{included} automatically, and therefore any interior vertices violating the spacing condition with
respect to a boundary vertex are removed from $G_M$ (Figure \ref{fig:boundary_1}).  All boundary vertices now respect the
spacing condition with respect to all remaining interior vertices, making the second stage, boundary vertex selection,
entirely independent of the previous coarsening of the interior.  The boundary vertex selection procedure then operates
independently of this, producing the completely coarsened graph $G_{\tilde{M}_c}$ (Figure \ref{fig:boundary_3}).  In 3D
this process is repeated once again.  During the boundary coarsening procedure, vertices lying on edges that have been
identified as ridges are automatically \texttt{included}.  The ridge vertices are then coarsened in turn.  Corner
vertices are automatically \texttt{included} during all stages of vertex selection, as shown in Figure
\ref{fig:boundary_2}.
\begin{figure}
  \label{fig:boundaries}
  \centering
    \subfigure[interior coarsening] {
      \label{fig:boundary_1}
      \includegraphics[width=30mm]{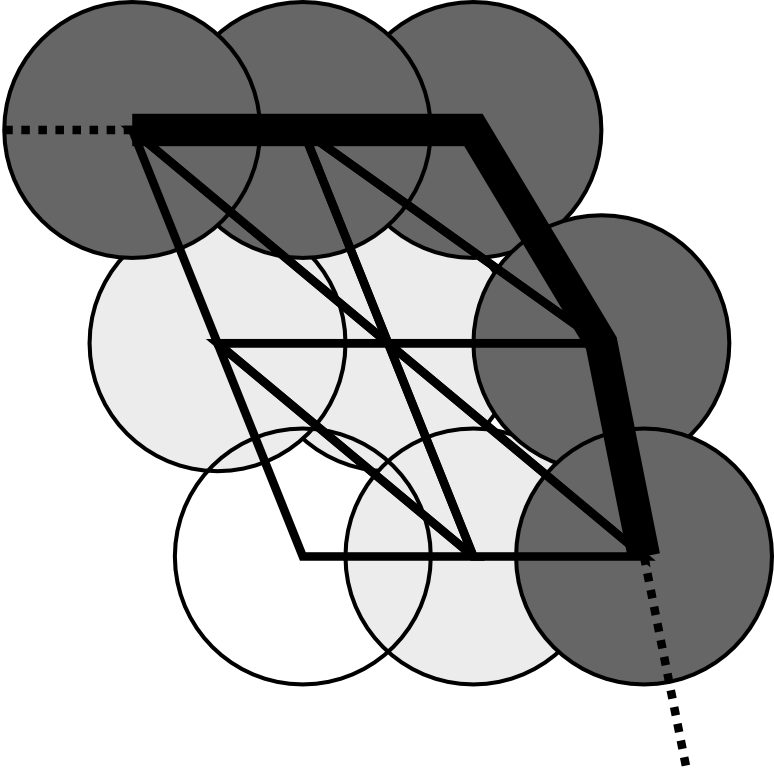}
    }
    \subfigure[boundary coarsening] {
      \label{fig:boundary_2}
      \includegraphics[width=30mm]{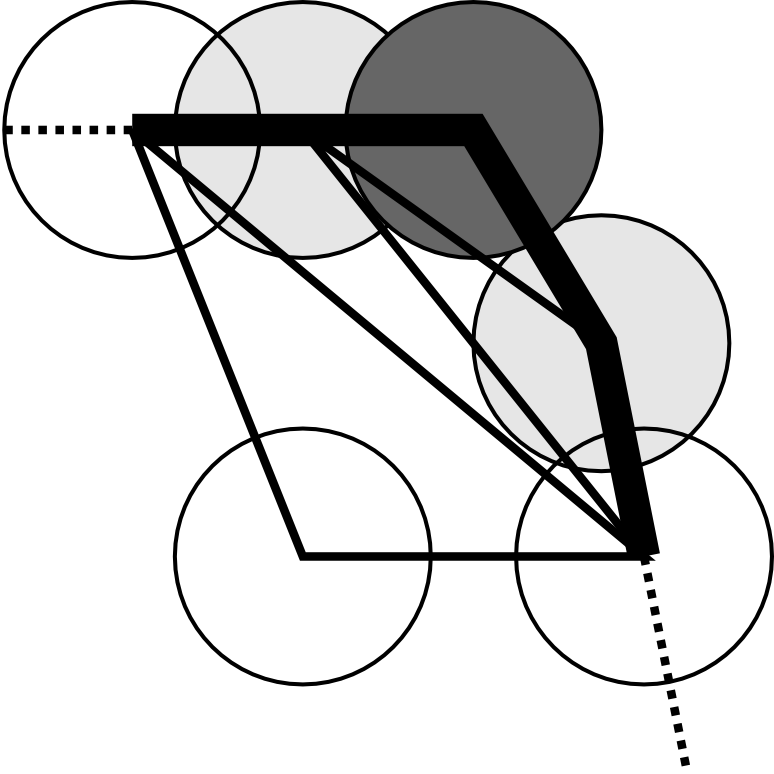}
    }
    \subfigure[final graph] {
      \label{fig:boundary_3}
      \includegraphics[width=30mm]{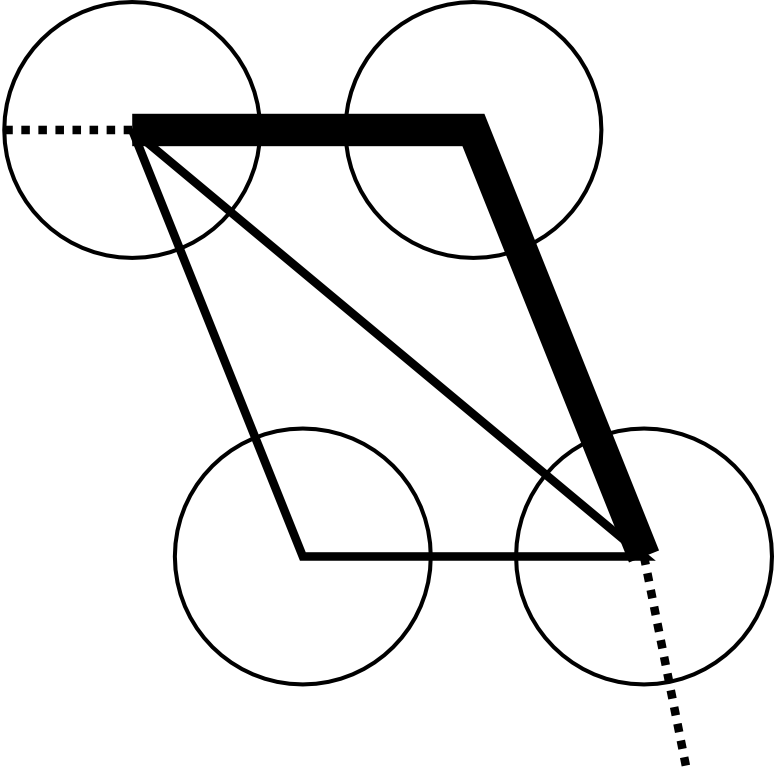}
    }
    \caption{Interior and boundary coarsening stages.  Dark gray vertices are \texttt{included} automatically; Light
      gray circles violate the spacing condition.}
\end{figure}
For remeshing, the only modification necessary for boundary preservation is that the removal of a boundary or ridge
vertex may only be done by contraction along an edge lying on the boundary or ridge.  This simple modification works
especially well for meshes that have a lot of flat planes on their surfaces with sharp ridges separating them.  Without
this procedure, ridges may appear torn or flipped in the coarser meshes, changing the shape of the domain significantly.
As the hierarchy coarse meshes are created, new edges or vertices may become marked as corners or as belonging to ridges.
\section{Interpolation Operators}
\label{sec:interpolation}
We have shown that local traversal is a powerful tool for the construction of node-nested coarse meshes.  Traversal of
the node-nested meshes may also be used to efficiently construct the interpolation operators $I_{l}$ for $l \in [0,
n-1]$.  $I_{l}$ is defined as the interpolation operator between the finite element function spaces $V^h_{c}$ and
$V^h_{f}$, which are defined over the coarse mesh $M^c$ and fine mesh $M^f$ respectively for the adjacent $f = l$ and $c
= l+1$.  We also assume that the domain $\Omega$ spanned by all $M$ is connected.  For the purposes of this paper, we
assume that for each DoF $i$, associated with basis function $\phi^{f}_i$ in $V^f_h$ in the system has some nodal point
$x_i \in \mathbb{R}^d$ and that a function $f$ may be projected into $V^f_h$ by constructing the coefficients
\begin{equation*}
  f_i = f(x_i)\cdot\psi^{f}_i(x_i).
\end{equation*}
Given this assumption, we may construct the prolongation operator from functions in $V^c_h$ to $V^f_h$ by
associating each $i$ with a cell $\tau^{c}_k$ in $M^{c}$ and constructing, for $\psi^{f}_i$ and basis functions
$\psi^{c}_j$ supported on $\tau^{c}_k$:
\begin{equation*}
  \label{eq:interpentry}
  I_{ij} = \psi^{c}_j(x_i)\cdot\psi^{f}_i(x_i).
\end{equation*}
One can, for the sake of simplicity, associate every $\psi^{f}_j$ with a single cell $\tau^{f}_i$. The choice of cell is
arbitrary between the cells the unknown is supported on.  This makes the problem equivalent to finding, for some set of
$x \in \Omega$, $T^{c}(x)$, defined as the cell in $M^{c}$ that $x$ is located in.  We also define $\tilde
T^{c}(x)$, which is some cell in $M^{c}$ that is, in some sense, nearby $x$.  In addition to the points $x_i$ associated
with all $\psi^{f}$, we have the midpoint $x_{\tau^f_i}$ of every cell $\tau^f_i \in M^{f}$.

We define a two-level nested-loop traversal algorithm.  The outer loop locates $T^{c}(x_{\tau^f})$ for each $\tau^f \in
M^{f}$, and the inner loop determines $T^{c}(x_i)$ for all $x_i$ associated with $\psi^{f}_i$ supported on $\tau^{f}_i$.
Once $T^{c}(x_i)$ is resolved for all $i$, one may construct the nodal interpolation operator.

The outer loop consists of breadth-first searches (BFSes) on the graph of the neighbor relations of the cells of $M^{c}$
in order to determine $T^{c}(x_{\tau_k})$ for each $\tau_k \in M^{f}$.  This is implemented by a simple FIFO queue in
which the neighbors of a given cell are pushed on the back of the queue when it is visited.  Enqueued and visited cells
are marked as such and not enqueued again.  We say that two cells $\tau^{c}_0$ and $\tau^{c}_1$ are neighbors if they
share a vertex, edge, or facet.  The BFS to locate $\tau_k$ starts with the cell $\tilde{T}^{c}(x_{\tau_k})$.

As $T^{c}(x_{\tau^f_k})$ is established for each cell $\tau^f_k$, the inner loop is invoked.  This loop consists of a BFS
for each $\psi^f_i$ associated with $\tau^f_k$ and determines $T^{c}(x_i)$ by BFS starting from $\tilde{T}^{c}(x_i) =
T^{c}(x_{\tau^f_k})$.

The last ingredient in the algorithm is how to determine $\tilde{T}^{c}(x_{\tau^f_k})$. One simple way of doing this is
setting $\tilde{T}^{c}(x_{\tau^f_k}) = T^{c}(x_{\tau^f_m})$ for any cell $\tau^f_m$ that is neighboring $\tau^f_k$.
This notion of locality may be exploited by setting the values of $\tilde{T}^{c}(\tau_n)$ for all neighbors $\tau_n$ of
a cell $\tau$ upon determining $T^{c}(\tau)$.  When $T^{c}(\tau^f)$ for any $\tau^f \in M^f$ is determined, the
connectivity of $M^{c}$ and $M^{f}$ may be effectively traversed in tandem in order to extend the search over the whole
meshed domain.

This process may be both started and made more efficient by exploiting the node-nested properties of the meshes.  We
have that meshes $M^{f}$ and $M^{c}$ will share some set of vertices $V^{(f, c)}$ due to our node-nested coarsening
procedure.  Define $\tau^{f}_v$ to be a cell in mesh $M^{f}$ that is adjacent to some $v \in V^{(f, c)}$ and
$\tau^{c}_v$ to be an arbitrary cell in $M^{c}$ adjacent to that same $v$.  Then, one may initialize
$\tilde{T^{c}}(x_{\tau^{f}_v})$ to be $\tau^{c}_v$.

An obervation about the complexity of this algorithm may be established assuming that the conditions in Section~\ref{sec:umg}
are satisfied by the hierarchy of meshes.  It should be obvious that the complexity is going to be bounded
by the total number of BFSes done during the course of the algorithm multiplied by the worst-case complexity of an
invocation of the BFS.  It's easy to see that for $|M^{l}|$ cells in the fine mesh and $n$ unknowns, the geometric
search must be done $|M^{l}| + n$ times.


In order to bound the complexity of a given BFS to a constant, we must show how many steps of traversal one must search
outwards in order to locate some $T^{c}(\tau^f)$ given $\tilde{T}^{c}(\tau^f)$.  This may be accomplished by showing
that the length scale the search has to cover on the fine mesh may only contain a given number of coarse mesh cells.  We
know that the $dist(x_{\tau^f_0}, x_{\tau^f})$ is going to be less than some maximum search radius $r_{\tau^f} =
h_{\tau^f} + h_{\tau^f_0}$ given that they are adjacent.  We also may put a lower limit on the minimum length scale of
cells in $M^c$ that overlap $\tau^f$ and $\tau^f_0$ by the constant in Eq.~\ref{eq:loccomp2} and by the aspect ratio
limit in Eq.~\ref{eq:aspectratiobound} as $h^c_{\min} = \frac{C_0h_{\tau^f}}{C_{AR}}$.

This gives us that the number of cells that may fit between $\tilde{T}^c(\tau^f)$ and $T^{c}(\tau^f)$ is at most
$\left\lceil\frac{r_\tau^f}{h^c_{\min}}\right\rceil$, which is independent of overall mesh size.  Note that the distance
between the center of a cell $\tau$ and some nodal points $x_i$ of $\psi_i$ supported on $\tau$ is always less than or
equal to $h_\tau$.  This extends this constant-size traversal bound to the inner loop also, making the entire algorithm
run in $O(|M^f| + n)$ time. In practice on isotropic meshes $\tilde{T}(\tau^f)$ and $T(\tau^f)$ are almost always in the
same cell or one cell over, meaning that the topological search is an efficient method for building the interpolation
operators.

An extension of the algorithm that has proven necessary on complex meshes is to allow for interpolation to $x_i$ not
located within a cell of the mesh.  For example, the Pacman mesh (Fig.~\ref{fig:pacmanhierarchy}), when coarsened, will
have vertices that lie outside the coarser mesh on the round section.  In order to do this, the outer loop BFSes are
replaced by a more complex arrangement where the BFSes search for the nearest cell rather than an exact location.  This
nearest cell then becomes $T(x_\tau)$.  The inner loop is replaced by a similar procedure, which then projects any $x_i$
that could not be exactly located to some $\tilde{x}_i$ on the surface of the nearest cell and modifies
the basis function projection to use $\phi^c_j(\tilde{x}_i)$ instead of $\phi^c_j(x_i)$.

This procedure is easily extended to discretizations consisting of non-nodal DoFs by locating sets of quadrature points
associated with the fine mesh cells instead of DoF nodes.  This also allows for the construction of pseudointerpolants
satisfying smoothness assumptions, such as the Scott-Zhang type interpolants~\cite{ScottZhangInterpolant}.

\section{Model Problems}
\label{sec:problems}

We test the multigrid method using standard isotropic non-quasi-uniform examples from the literature.  The domains used
are the Pacman in 2D and the Fichera corner in 3D.  The problems and boundary conditions computed are designed to induce
a corner singularity that makes refinement such as that described in Section~\ref{sec:afem} necessary.

The standard model problem for the Pacman domain is \cite{ApelComparison} designed to have a pollution effect at the
corner singularity.  The Pacman domain is a unit circle with a tenth removed.  The reentrant angle is therefore $\theta_r
= \frac{9\pi}{5}$ radians.  The associated differential equation is
\begin{align*}
  - \Delta u &= 0 \mbox{ \text{ in } } \Omega,\\
    u(r, \theta) &= r^{\frac{2}{3}}\sin\left(\frac{2}{3}\theta\right)  \mbox{\text{ on }} \partial\Omega.
\end{align*}
For the Fichera corner model problem, we separate the boundary of the domain into the reentrant surface $\partial
\Omega_{i}$ and the outer surface $\partial \Omega_{o}$.  The Fichera corner mesh consists of a twice-unit cube centered
at the origin with a single octant removed.  The reentrant angle is $\theta_r = \frac{3\pi}{2}$ radians along the
reentrant edges.  A standard model problem with both Dirichlet and Neumann boundary conditions used for the Fichera
corner domain is \cite{RachowiczAutoHP}
\begin{align*}
  -\Delta u &= 0 \mbox{\text{ in } } \Omega,\\
  \frac{\partial u}{\partial n} &= g \text{ on } \partial\Omega_{o},\\
  u &= 0 \text{ on } \partial\Omega_{i}.
\end{align*}
Where 
\begin{align*}
    g(x,y,z) &= g_{xy} + g_{yz} + g_{zy},\\
    g_{x_ix_j}(r, \theta) &= r^{\frac{2}{3}}\cos(\theta).
\end{align*}
where $r$ and $\theta$ consist of the radius and angle when restricted to the $x_i$, $x_j$ plane.  Unfortunately, this
problem does not have an analytical exact solution.  Our goal is to show that we can effectively coarsen the \textit{a
  priori} grading required to resolve the problem, so we will not be concerned with the solution accuracy as a measure
of performance.
\subsection{Mesh Quality Study}
\label{sec:quality}

In order to motivate the use of the method with the multigrid solver, one must look if our implementation of the method
actually lives up to the mesh quality metrics stated as requirements for guaranteed multigrid performance.  This study
was done using large initial Pacman and Fichera corner meshes coarsened using the same algorithm used for the multigrid
studies in Section~\ref{sec:multperf}.

The measurements of mesh quality we have taken correspond to the bounds on the meshes in the multigrid requirements.
These are the number of vertices and cells in the mesh, the worst aspect ratio in each mesh, the maximum number of cells
in a mesh overlapping any given cell in the next coarser mesh, and the maximum change in local length-scale between a
mesh and the next coarser mesh at any point in the domain.  The meshes are created by repeated application of the
coarsening algorithm with $\beta = 1.5$ in 2D and $\beta = 1.8$ in 3D.
\begin{figure}[H] 
  \centering
  \includegraphics[width=140mm]{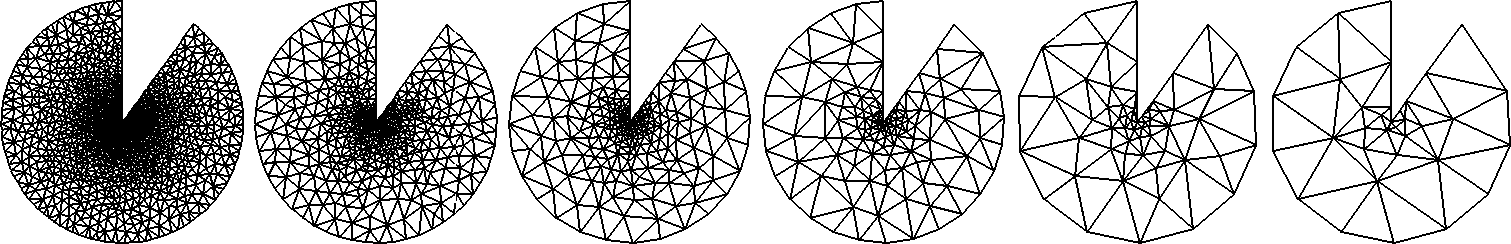} 
  \caption{Mesh hierarchy produced by function-based coarsening of an initial pacman mesh.}
  \label{fig:pacmanhierarchy}
\end{figure}
\begin{table}[H]
\caption{Hierarchy quality metrics - 2D}
\begin{center}
\begin{tabular}{|r||r|r|r|r|r|}
\hline
\multicolumn{6}{|c|}{Pacman Mesh, $\beta = 1.5$}\\
\hline
level & cells & vertices & $\max(AR(\tau))$ &  $\displaystyle\sup_{\tau \in M^k}\{|S^k_{\tau}|\}$ & $\displaystyle\max_{x \in \Omega}\left(\frac{h_{k-1}(x)}{h_{k}(x)}\right)$ \\
\hline
0 & 70740 & 35660 & 3.39 & -- & -- \\
1 & 22650 & 11474 & 7.64 & 14 & 9.48 \\
2 & 7562  & 3858  & 7.59 & 15 & 6.23 \\
3 & 2422  & 1254  & 4.63 & 15 & 4.19 \\
4 & 811   & 428   & 5.52 & 15 & 5.32 \\
5 & 257   & 143   & 5.94 & 15 & 8.51 \\
6 & 86    & 52    & 7.76 & 12 & 4.94 \\
\hline
\end{tabular}
\end{center}
\label{tab:coarsenperf2d}
\end{table}
\begin{figure}[H] 
  \centering
  \includegraphics[width=140mm]{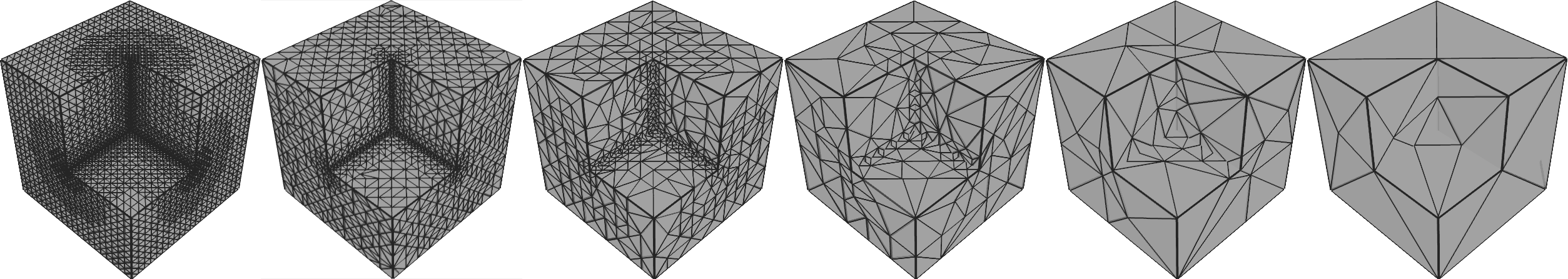} 
  \caption{hierarchy of Fichera meshes}
  \label{fig:ficherahierarchy}
\end{figure}
\begin{table}[H]
\caption{Hierarchy quality metrics - 3D}
\begin{center}
\begin{tabular}{|r||r|r|r|r|r|}
\hline
\multicolumn{6}{|c|}{Fichera Mesh, $\beta = 1.8$, $C_{AR} = 60$}\\
\hline
level & cells & vertices & $\max(AR(\tau))$ & $\displaystyle\sup_{\tau \in M^k}\{|S^k_{\tau}|\}$ & $\displaystyle\max_{x \in \Omega}\left(\frac{h_{k-1}(x)}{h_{k}(x)}\right)$ \\
\hline
0 & 373554 & 72835 & 4.41 & --  & --   \\
1 & 49374  & 9120  & 59.3 & 197 & 6.63 \\
2 & 11894  & 2269  & 59.2 & 131 & 6.70 \\
3 & 3469   & 693   & 59.3 & 94  & 6.68 \\
4 & 914    & 208   & 58.8 & 121 & 6.61 \\
5 & 182    & 52    & 49.5 & 94  & 6.87 \\
\hline
\end{tabular}
\end{center}
\label{tab:coarsenperf3d}
\end{table}
In 2D (Table \ref{tab:coarsenperf2d}) the aspect ratio of the resulting cells stays within acceptable limits during the
entire process.  The slight increase in the aspect ratio is expected for the coarsening of highly graded meshes, as the
coarser versions must necessarily remove vertices lying between refined and coarse regions of the non-quasi-uniform
mesh.  However, the associated increase in aspect ratio is kept to a minimum by the enforcement of the Delaunay
condition.

In 3D (Table \ref{tab:coarsenperf3d}), we see consistent decrease of the mesh size and increase in the length scale.
The maximum aspect ratio stays around $C_{AR}$ for most of the levels.  Further work on our incredibly simple remeshing
scheme should be able to improve this.  However, we do not see successive degradation of the quality of the series of
meshes as the coarsening progresses.  We can assume that the quality constraints are reasonably met in both 2D and 3D.

\subsection{Multigrid Performance}
\label{sec:multperf}
\begin{table}
  \centering 
  \caption{
    Multigrid Performance on the 2D (Pacman) and 3D (Fichera) problems. Levels is the number of mesh levels. MG is
    the number of multigrid V-cycles. ILU is ILU-preconditioned GMRES iterations. $L_2$ Error is computed based upon
    difference from the exact solution for the Pacman test problem.
} 
  \label{tab:mgperf}
  \begin{minipage}{0.45\textwidth}
  \subtable[Pacman Problem Performance]{
    \centering
    \begin{tabular}{|r||r|r|r|r|}
      \hline
      DoFs        & levels    & MG            & ILU              & $L_2$ Error \\
      \hline
      762         & 3         & 6             & 76               & 9.897e-6 \\
      1534        & 4         & 6             & 129              & 3.949e-6 \\
      3874        & 5         & 6             & 215              & 9.807e-7\\
      9348        & 5         & 7             & 427              & 2.796e-7\\
      23851       & 6         & 7             & 750              & 1.032e-7 \\
      35660       & 7         & 7             & 1127             & 8.858e-8 \\
      78906       & 7         & 7             & 1230             & 3.802e-8 \\
      139157      & 8         & 9             & 3262             & 1.535e-8 \\
      175602      & 8         & 9             & 3508             & 3.440e-9 \\
      \hline
    \end{tabular}
    \label{tab:mgperf2d}
  }
\end{minipage}
  \begin{minipage}{0.45\textwidth}
  \subtable[Fichera Problem Performance]{
    \centering
      \begin{tabular}{|r||r|r|r|}
        \hline
        DoFs     &    levels & MG              &ILU               \\
        \hline
        2018        & 2         & 7               & 46               \\
        3670        & 2         & 8               & 60               \\
        9817        & 3         & 8               & 112              \\
        27813       & 4         & 9               & 207              \\
        58107       & 5         & 9               & 385              \\
        107858      & 6         & 9               & 543              \\
        153516      & 6         & 9               & 440              \\
        206497      & 7         & 9               & 616              \\
        274867      & 8         & 9               & 652              \\
        \hline
      \end{tabular}
      \label{tab:mgperf3d}
    }
    \end{minipage}
\end{table}
The performance of multigrid based upon the mesh series created by this algorithm using the two test examples was
carried out using the DOLFIN \cite{DOLFIN} finite element software modified to use the PCMG multigrid preconditioner
framework from PETSc \cite{PETScWeb}. The operators were discretized using piecewise linear triangular and tetrahedral
finite elements.  The resulting linear systems were then solved to a relative tolerance of $10^{-12}$.  The solvers used
were ILU-preconditioned GMRES for the standard iterative case, shown in the ILU columns as a control.  In the
multigrid case we chose to use GMRES preconditioned with V-cycle multigrid.  Three pre and post-smooths using ILU as the
smoother were performed on all but the coarsest level, for which a direct LU solve was used.  For this to make sense, the
coarsest mesh must have a small, nearly constant number of vertices; a condition easily satisfied by the mesh creation
routine.  We coarsen until the coarsest mesh is around 200 vertices in 2D or 300 vertices in 3D.


These experiments show that the convergence of the standard iterative methods becomes more and more arduous as the
meshes become more and more singularly refined.  The singularity in 2D is much greater, due to the sharper reentrant
angle, than it is in 3D, so the more severe difference in performance between multigrid and the control method is to be
expected.  We see that the number of multigrid cycles levels out to a constant for both 2D (Table \ref{tab:mgperf2d})
and 3D (Table \ref{tab:mgperf3d}).  We also see that a steadily increasing number of multigrid levels are automatically
generated and that the method continues to behave as expected as the coarsening procedure is continually applied.

\section{Discussion}

We have proposed and implemented a method for mesh coarsening that is well suited to the construction of geometric
unstructured multigrid on 2D and 3D meshes.  The technique used to construct the mesh hierarchy is novel in that it
combines the advantages of graph-based mesh coarsening algorithms with those of techniques considering the local feature
size of the mesh.  This is done in a straightforward way that only considers the local connectivity of the mesh.

The further extension of the method to a parallel environment is fairly straightforward.  Incorporating a separate
parallel coarsening stage for vertices on the processor boundaries would be a natural extension of the method.
Techniques with a localized coarser solve, such as a variant of the Bank-Holst method \cite{BankBankHolstVariants},
could be easily replicated in this framework as well.

Meshing issues still exist, as the local remeshing can be quite costly in 3D.  There are several ways the remeshing
could be improved, both in terms of efficiency and mesh quality guarantees. It is an experimental observation that
remeshing using meshing software with constrained interior vertices is often faster than vertex removal.  A mixed
approach, where the domain could be topologically traversed and divided into regions that could be remeshed separately,
may be an interesting compromise.  Both approaches to remeshing are also easily extendable to the creation of an
all-encompassing parallel algorithm.

Despite this cost, the big win is twofold: treating the construction of coarse spaces as a precomputation step in some
nonlinear or optimization solve, and using higher-order elements, where the topological description of the mesh is
smaller compared to the number of DoFs.  It's been shown to be quite important to preserve the order of the
approximation space when using multigrid for higher-order problems \cite{KoesterHigherOrderMG}, and the techniques for
efficiently building both pointwise and other types of interpolants shown here already work in a similar fashion for
higher order FEM discretizations.  As we can easily control the rate at which the mesh is decimated, we can coarsen more
aggressively for the high order methods, retaining the optimal complexity of the method.

One final future direction is to look at how this techique applies to problems on anisotropic meshes and the associated
theory.  There are successful node-nested technques proposed for semicoarsening anisotropic meshes
\cite{mavriplisnodenestedMG}, and the coarsening technique described here could be applied in interesting ways.  The
other related problem of interest involves multigrid methods for problems with highly variable coefficients.  Often
these methods are developed and implemented in some nested framework \cite{ChenLocalMultigrid}, but with our ability to
build meshes and interpolation operators that preserve interfaces (due to their construction by traversal) we should be
able to look at these problems in the generalized unstructured setting.


\end{document}